\renewcommand{\P}{\mathsf P}
\renewcommand{\a}{\alpha}
\begin{document}

\newtheorem{proposition}{Proposition}
\newtheorem{theorem}{Theorem}
\newtheorem{corollary}{Corollary}
\newtheorem{lemma}{Lemma}

\theoremstyle{definition}
\newtheorem{remark}{Remark}
\newtheorem{example}{Example}

\newcommand{\iid}{{\stackrel{\rm iid}{\sim}}}

\allowdisplaybreaks

\baselineskip3.5ex

\def\arraystretch{1.5}

\title{On the method of pivoting the CDF for exact \\ confidence intervals with illustration for exponential \\ mean under life-test with time constraints}

\author{N.~Balakrishnan$^1$ \and E.~Cramer$^2$ \and G.~Iliopoulos$^3$}

\date{}

\maketitle

\footnotetext[1]{Department of Mathematics and Statistics, McMaster University, Hamilton, Ontario, Canada L8S 4K1; e-mail: {\tt bala@mcmaster.ca}}
\footnotetext[2]{Institute of Statistics, RWTH Aachen University; 52056 Aachen, Germany; e-mail: {\tt erhard.cramer@rwth-aachen.de}}
\footnotetext[3]{(Corresponding author) Department of Statistics and Insurance Science, University of Piraeus; 80 Karaoli \& Dimitriou str., 18534 Piraeus, Greece; e-mail: {\tt geh@unipi.gr}}

\maketitle

\begin{abstract}
Two requirements for pivoting a cumulative distribution function (CDF) in order to construct exact confidence intervals or bounds for a real-valued parameter $\theta$ are the monotonicity of this CDF with respect to $\theta$ and the existence of solutions of some pertinent equations for $\theta$. The second requirement is not fulfilled by the CDF of the maximum likelihood estimator of the exponential scale parameter when the data come from some life-testing scenarios such as type-I censoring, hybrid type-I censoring, and progressive type-I censoring that are subject to time constraints. However, the method has been used in these cases  probably because the non-existence of the solution usually happens only with small probability. Here, we illustrate the problem by giving formal details in the case of type-I censoring and by providing some further examples. We also present a suitable extension of the basic pivoting method which is applicable in situations wherein the considered equations have no solution.
\end{abstract}

\noindent
{\it Keywords:} Pivoting the CDF, maximum likelihood estimator, exact confidence intervals, exponential distribution, type-I censoring, hybrid type-I censoring, generalized hybrid type-II censoring, progressive type-I censoring.

\section{Introduction}

Pivoting the CDF is a standard method for the construction of exact confidence intervals for a real parameter $\theta$ which dates back to 1934 when Clopper and Pearson used it to derive an exact confidence interval for the success probability of binomial distribution. The method was strictly formulated by Barlow et al.~(1968) and is well described in Casella and Berger (2002, Sec.~9.2.3). It is based on the inversion with respect to $\theta$ of the cumulative distribution function (CDF) $F(y;\theta)$ of a (typically) sufficient statistic $Y$. In the case when $Y$ is continuous, the method is applicable under the following conditions:
\begin{enumerate}
\item[I.] The distribution of $Y$ is stochastically increasing (resp., decreasing) in $\theta$, i.e., for all $y$, we have $F(y;\theta_1)\geqslant\mbox{(resp., $\leqslant$)}\, F(y;\theta_2)$ whenever $\theta_1<\theta_2$;
\item[II.] For some $\alpha_1$, $\alpha_2>0$ with $\alpha_1+\alpha_2=\alpha\in(0,1)$, the equations $F(y;\theta)=1-\alpha_1$ and $F(y;\theta)=\alpha_2$ can be solved with respect to $\theta$ for all possible values $y$ of $Y$.
\end{enumerate}
If we denote by $\theta_L(y)<\theta_U(y)$ the solutions to the above equations, the random interval $[\theta_L(Y),\theta_U(Y)]$ is then an exact $100(1-\alpha)\%$ confidence interval for $\theta$. The typical choice is $\alpha_1=\alpha_2=\alpha/2$, though it is not necessary in general. By slightly generalizing condition II, the method can be also applied in cases when the distribution of $Y$ is noncontinuous (e.g.,~discrete); see Casella and Berger (2002).

In the last 45 years, the above approach has been adopted by many researchers for constructing exact confidence intervals and/or bounds for the mean of the exponential distribution based on data obtained from 
life-testing experiments under time constraints. Schemes such as hybrid type-I censoring, generalized hybrid type-II censoring and progressive type-I censoring  can be thought of as extensions of the conventional type-I (right) censoring, and were introduced in the literature to overcome  some  drawbacks in the conventional type-I censoring. Recall that a random sample is said to be type-I right censored, hereafter simply ``type-I censored'', if only observations up to a certain time $T$ are observed. To fix ideas, suppose $n$ items are subject to a life-test and denote by $X_1,\ldots,X_n$ their lifetimes. Assume that they are iid random variables from an exponential distribution with mean $\theta\in\Theta=(0,\infty)$. Let $T>0$ be a fixed time point and suppose the life-test is terminated at time $T$. If $D=\sum_{i=1}^n I(X_i\leqslant T)$ denotes the number of failures observed 
 until time $T$, then the maximum likelihood estimator (MLE) of $\theta$ exists if and only if $D\geqslant 1$ and is given by
\[
\hat{\theta} = \frac{\sum_{i=1}^D X_{i:n}+(n-D)T}{D},
\]
where $X_{1:n}<\cdots<X_{n:n}$ is the ordered sample (cf.~Arnold et al., 2008). But when $D=0$, the MLE does not exist as in this case it can be readily seen that the likelihood function is a monotone 
increasing function of $\theta$. The estimator $\hat{\theta}$ seems to have been proposed for the first time by Bartlett (1953). Its conditional CDF, given $D\geqslant 1$, was derived in closed-form by Bartholomew (1963). It can be represented as a mixture of left-truncated gamma distributions with both positive and negative weights (sometimes referred to as a generalized mixture) with the shape parameters of these gamma distributions being integers ranging from $1$ to $n$. The exact form of this CDF,  denoted by $F(y;\theta|D\geqslant 1)$, is presented in Section \ref{type-I censoring}.

Since $F(y;\theta|D\geqslant 1)$ is available in closed-form, it seems natural to pivot it in order to construct exact confidence intervals for $\theta$. In fact, this particular approach has been suggested by Barlow et al.~(1968) who also provided a computer program to calculate the corresponding confidence limits. However, although in this case condition I holds (see Balakrishnan et al., 2002, or Balakrishnan and Iliopoulos, 2009), condition II fails. Indeed, in Section 3 (see Corollary \ref{range of F}), it is shown that for any $u\in(0,1)$, the range of $F((n-1+u)T;\theta|D\geqslant 1)$ as a function of $\theta$ is the interval $(u,1)$. This implies that whatever $\alpha_1$ and $\alpha_2$  we choose, there is always a positive probability set $B$ such that for any $y\in B$ at least one of the equations  in condition II has no solution. Hence, the method of pivoting the CDF can not be applied in this case unless it is modified suitably. It turns out that the same problem appears for the CDF of the MLE of $\theta$ for many other life-testing scenarios such as those mentioned earlier.  However, this problem does not seem to have been noted while the method itself has been adopted in several sampling scenarios.

The rest of this paper is organized as follows. In Section \ref{the extension}, we present an extension of the method of pivoting the CDF that covers situations like those mentioned above. We prove that the confidence intervals constructed by this extension still remain exact. In Section \ref{type-I censoring}, we discuss in detail the case of type-I right censored exponential lifetimes and prove that the original pivoting method can never be applied as it is. In Section \ref{other sampling schemes}, we point out some more general life-testing experiments wherein the same problem persists.  We conclude with a brief discussion in Section \ref{discussion}.

\section{An extension of the method of pivoting the CDF}
\label{the extension}

Let $Y$ be a statistic with CDF $F(y;\theta)$ which is continuous in both $y\in\mathbb R$ and $\theta\in\Theta$. Assume that $\Theta$ is an open, half-open or closed interval with endpoints $\underline{\theta}<\overline{\theta}$ which may be $-\infty$ and/or $\infty$, respectively. Assume further that $Y$ is stochastically increasing in $\theta$, i.e., $F(y;\theta)$ is a decreasing function of $\theta$ for each $y$ and let
\[
F(y;\underline{\theta}) = \lim_{\theta\downarrow\underline{\theta}}F(y;\theta), \quad F(y;\overline{\theta}) = \lim_{\theta\uparrow\overline{\theta}}F(y;\theta).
\]
By the monotonicity of $F(y;\theta)$ with respect to $\theta$ and the fact that it is bounded, these limits exist for all $y$. To keep things simple, let us assume that the monotonicity with respect to $\theta$ is strict. Then, for any $y\in\mathcal Y={\rm support}(Y)$ and $\alpha\in(0,1)$, the equation $F(y,\theta)=\alpha$ will have a solution $\theta(\alpha,y)\in(\underline{\theta},\overline{\theta})$ if and only if $F(y;\overline{\theta})<\alpha<F(y;\underline{\theta})$. Define
\begin{equation}\label{theta star}
\theta^*(\alpha,y) = \left\{\begin{array}{ll} \underline{\theta},& \mbox{if $F(y;\underline{\theta})\leqslant\alpha$}, \\
\theta(\alpha,y),& \mbox{if $F(y;\overline{\theta})<\alpha<F(y;\underline{\theta})$}, \\ \overline{\theta},& \mbox{if $F(y;\overline{\theta})\geqslant\alpha$}. \end{array}\right.
\end{equation}
Then, we have the following result.

\begin{lemma}\label{P theta star}
For any $\alpha\in(0,1)$, $\P_{\theta}\{\theta^*(\alpha,Y)\leqslant\theta\}=\alpha$.
\end{lemma}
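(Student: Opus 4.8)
The plan is to reduce the statement to the probability integral transform. Since $F(\,\cdot\,;\theta)$ is, for the true value $\theta$, a continuous cumulative distribution function of $Y$, the random variable $U=F(Y;\theta)$ is uniformly distributed on $(0,1)$; in particular $\P_\theta\{F(Y;\theta)\leqslant\alpha\}=\alpha$ for every $\alpha\in(0,1)$. Hence it suffices to prove the set identity
\[
\{\theta^*(\alpha,Y)\leqslant\theta\}=\{F(Y;\theta)\leqslant\alpha\}
\]
up to a $\P_\theta$-null set, after which the lemma follows immediately.

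To establish this identity I would partition the sample space $\mathcal Y$ according to the three branches in the definition \eqref{theta star} of $\theta^*$: the region $R_1=\{y:F(y;\underline\theta)\leqslant\alpha\}$ on which $\theta^*=\underline\theta$, the region $R_3=\{y:F(y;\overline\theta)\geqslant\alpha\}$ on which $\theta^*=\overline\theta$, and the middle region $R_2$ on which $\theta^*=\theta(\alpha,y)$ solves $F(y;\theta^*)=\alpha$. Strict monotonicity of $F(y;\,\cdot\,)$ guarantees that $R_1\cap R_3=\emptyset$ (otherwise $F(y;\underline\theta)=F(y;\overline\theta)=\alpha$ would contradict $\underline\theta<\overline\theta$), so that $R_1,R_2,R_3$ genuinely partition $\mathcal Y$. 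On each region I would then compare the event $\{\theta^*(\alpha,y)\leqslant\theta\}$ with $\{F(y;\theta)\leqslant\alpha\}$, using throughout that $F(y;\overline\theta)\leqslant F(y;\theta)\leqslant F(y;\underline\theta)$ because $\underline\theta\leqslant\theta\leqslant\overline\theta$ and $F(y;\,\cdot\,)$ is decreasing.

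The comparison is transparent on the three pieces. On $R_1$ one has $\theta^*=\underline\theta\leqslant\theta$, and simultaneously $F(y;\theta)\leqslant F(y;\underline\theta)\leqslant\alpha$, so both events hold. On $R_3$ (for interior $\theta$) one has $\theta^*=\overline\theta>\theta$, while strict monotonicity forces $F(y;\theta)>F(y;\overline\theta)\geqslant\alpha$, so both events fail. On the middle region $R_2$, where $F(y;\theta^*)=\alpha$, strict monotonicity yields the clean equivalence $\theta^*(\alpha,y)\leqslant\theta\iff F(y;\theta^*)\geqslant F(y;\theta)\iff F(y;\theta)\leqslant\alpha$. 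Assembling the three pieces gives the desired identity, and the conclusion follows from the uniform distribution of $F(Y;\theta)$.

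The main obstacle I anticipate is purely a matter of boundary bookkeeping rather than substance: the clamped branches ($\theta^*=\underline\theta$ or $\theta^*=\overline\theta$) and the treatment of $y$ with $F(y;\theta)=\alpha$ must be arranged so the two events agree. When $\theta$ sits at an endpoint of $\Theta$, or on the set $\{F(Y;\theta)=\alpha\}$, the identity may hold only up to a $\P_\theta$-null set; but since $F(Y;\theta)$ is continuous, $\P_\theta\{F(Y;\theta)=\alpha\}=0$, so any such discrepancy does not affect the final probability. I would therefore carry out the case analysis in full for an interior $\theta$ and dispose of the endpoint and boundary cases by this null-set argument.
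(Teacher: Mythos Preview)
Your argument is correct and somewhat more direct than the paper's. Both proofs partition $\mathcal Y$ according to the three branches of \eqref{theta star} and both hinge on the equivalence $\theta(\alpha,y)\leqslant\theta\Leftrightarrow F(y;\theta)\leqslant\alpha$ on the middle region. The paper, however, first argues that the three pieces are ordered ``intervals'' with cut points $y_1=\sup B_1(\alpha)$ and $y_2=\sup B_2(\alpha)$, then applies the law of total probability and evaluates the conditional probability on $B_2$ via the truncated CDF $F_2(y;\theta)=\{F(y;\theta)-F(y_1;\theta)\}/\{F(y_2;\theta)-F(y_1;\theta)\}$, using that $F_2(Y;\theta)$ is uniform on $(0,1)$ given $Y\in B_2$; the three contributions are then reassembled algebraically to produce $\alpha$. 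Your route sidesteps both the interval structure and the truncated-uniform computation: once the set identity $\{\theta^*(\alpha,Y)\leqslant\theta\}=\{F(Y;\theta)\leqslant\alpha\}$ is checked on each piece, a single global application of the probability integral transform finishes the job. One small caveat: your closing remark that the endpoint cases $\theta\in\{\underline\theta,\overline\theta\}$ can be disposed of by a null-set argument is not accurate (for $\theta=\overline\theta$ the two events differ on $R_3$, which has probability $1-\alpha$), but the paper's own proof likewise restricts to $\theta\in(\underline\theta,\overline\theta)$, so this does not affect the comparison.
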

\begin{proof}
Partition $\mathcal Y$ into  non-overlapping sets $B_1(\alpha)=\{y:\theta^*(\alpha,y)=\underline{\theta}\}$, $B_2(\alpha)=\{y:\theta^*(\alpha,y)=\theta(\alpha,y)\}$, $B_3(\alpha)=\{y:\theta^*(\alpha,y)=\overline{\theta}\}$, and 
then set $y_1=\sup B_1(\alpha)$, $y_2=\sup B_2(\alpha)$. Note that $y_1\leqslant \inf B_2(\alpha)$ and $y_2\leqslant\inf B_3(\alpha)$. Indeed, if $y\in B_1(\alpha)$ and $y'<y$, then $F(y';\theta)\leqslant F(y;\theta)$ and by taking the limits as $\theta\downarrow\underline{\theta}$, we see that $F(y';\underline{\theta})\leqslant\alpha$ as well. Similarly, if $y\in B_3(\alpha)$ and $y'>y$, then $y'\in B_3(\alpha)$, too. Hence, $B_1(\alpha)$ contains ``small'' $y$'s and $B_3(\alpha)$ contains ``large'' $y$'s, and so $B_2(\alpha)$ lies between these two sets. It turns out that $\P_{\theta}\{Y\in B_1(\alpha)\}=F(y_1;\theta)$, $\P_{\theta}\{Y\in B_2(\alpha)\}=F(y_2;\theta)-F(y_1;\theta)$, and $\P_{\theta}\{Y\in B_3(\alpha)\}=1-F(y_2;\theta)$. Applying now the theorem of total probability, we get for any $\theta\in(\underline{\theta},\overline{\theta})$,
\begin{align}
\P_{\theta}\{\theta^* & (\alpha,Y)\leqslant\theta\} = \sum_{i=1}^{3}\P_{\theta}\{\theta^*(\alpha,Y)\leqslant\theta|Y\in B_i(\alpha)\}\P_{\theta}\{Y\in B_i(\alpha)\} \nonumber
\\
 =\,& \P_{\theta}\{Y\in B_1(\alpha)\} + \P_{\theta}\{\theta^*(\alpha,Y)\leqslant\theta|Y\in B_2(\alpha)\}\P_{\theta}\{Y\in B_2(\alpha)\} \nonumber
\\
=\,& F(y_1;\theta) + \P_{\theta}\{\theta^*(\alpha,Y)\leqslant\theta|Y\in B_2(\alpha)\}\{F(y_2;\theta)-F(y_1;\theta)\}. \label{line3}
\end{align}
By definition, for $y\in B_2(\alpha)$, we have $\theta(\alpha,y)\leqslant\theta\Leftrightarrow F(y;\theta)\leqslant F(y;\theta(\alpha,y))=\alpha$. Notice that the conditional CDF of $Y$, given $Y\in B_2(\alpha)$, is the truncated continuous CDF
\[
F_2(y;\theta)=\frac{F(y;\theta)-F(y_1;\theta)}{F(y_2;\theta)-F(y_1;\theta)},\quad y\in B_2(\alpha).
\]
Thus,
\begin{align*}
& \P_{\theta}\{\theta^*(\alpha,Y)\leqslant\theta|Y\in B_2(\alpha)\} = \P_{\theta}\{F(Y;\theta)\leqslant\alpha|Y\in B_2(\alpha)\} = \\ & \P_{\theta}\bigg\{F_2(Y;\theta)\leqslant \frac{\alpha-F(y_1;\theta)}{F(y_2;\theta)-F(y_1;\theta)}\,\Big|\,Y\in B_2(\alpha)\bigg\} = \frac{\alpha-F(y_1;\theta)}{F(y_2;\theta)-F(y_1;\theta)},
\end{align*}
since, conditional on $Y\in B_2(\alpha)$, $F_2(Y;\theta)$ follows a uniform$(0,1)$ distribution. Upon substituting the last quantity in \eqref{line3}, we get the required result.
\end{proof}

\begin{theorem}\label{thm I star}
Let $\alpha_1,\alpha_2>0$ with $\alpha_1+\alpha_2=\alpha\in(0,1)$. Then, the random set
\begin{equation*}\label{I star}
I^*_{\alpha_1,\alpha_2}(Y)=[\theta^*(1-\alpha_1,Y),\theta^*(\alpha_2,Y)]
\end{equation*}
is an exact $100(1-\alpha)\%$ confidence interval for $\theta$.
\end{theorem}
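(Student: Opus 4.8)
The plan is to reduce the whole statement to two applications of Lemma \ref{P theta star}, since that lemma already contains the analytic core. First I would record the monotonicity of $\theta^*(\alpha,y)$ in its first argument: for fixed $y$, as $\alpha$ increases the level set $F(y;\theta)=\alpha$ is attained at a smaller $\theta$ (because $F(y;\cdot)$ is strictly decreasing), while in the two boundary branches of \eqref{theta star} the value is pinned at $\overline{\theta}$ for small $\alpha$ and at $\underline{\theta}$ for large $\alpha$; hence $\alpha\mapsto\theta^*(\alpha,y)$ is non-increasing. Because $\alpha_1+\alpha_2=\alpha<1$ forces $1-\alpha_1>\alpha_2$, this yields $\theta^*(1-\alpha_1,Y)\leqslant\theta^*(\alpha_2,Y)$, so $I^*_{\alpha_1,\alpha_2}(Y)$ is always a genuine (possibly degenerate) interval with correctly ordered endpoints.

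Next I would compute the coverage probability by splitting $\mathcal Y$ according to the position of $\theta$ relative to the interval. Using the ordering just established, the three events $\{\theta<\theta^*(1-\alpha_1,Y)\}$, $\{\theta^*(1-\alpha_1,Y)\leqslant\theta\leqslant\theta^*(\alpha_2,Y)\}$ and $\{\theta>\theta^*(\alpha_2,Y)\}$ are disjoint and exhaustive. The first is the exact complement of the event in Lemma \ref{P theta star} taken at level $1-\alpha_1$, so $\P_{\theta}\{\theta<\theta^*(1-\alpha_1,Y)\}=1-\P_{\theta}\{\theta^*(1-\alpha_1,Y)\leqslant\theta\}=1-(1-\alpha_1)=\alpha_1$. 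It therefore remains to show that $\P_{\theta}\{\theta>\theta^*(\alpha_2,Y)\}=\alpha_2$, after which the coverage is $1-\alpha_1-\alpha_2=1-\alpha$ by subtraction.

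The one genuinely delicate point---and the step I expect to be the main obstacle---is passing from the closed inequality supplied by the lemma to the strict inequality needed here. Lemma \ref{P theta star} gives $\P_{\theta}\{\theta^*(\alpha_2,Y)\leqslant\theta\}=\alpha_2$, so $\P_{\theta}\{\theta>\theta^*(\alpha_2,Y)\}=\alpha_2-\P_{\theta}\{\theta^*(\alpha_2,Y)=\theta\}$, and I must argue that the boundary atom has probability zero. For interior $\theta$ the equality $\theta^*(\alpha_2,Y)=\theta$ can occur only in the middle branch of \eqref{theta star} (the two boundary branches return $\underline{\theta}$ or $\overline{\theta}$, which differ from an interior $\theta$), where it forces $F(Y;\theta)=\alpha_2$; since $F(\cdot;\theta)$ is continuous, the probability integral transform gives that $F(Y;\theta)$ is uniform$(0,1)$, whence $\P_{\theta}\{F(Y;\theta)=\alpha_2\}=0$ and the atom vanishes. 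Substituting this back yields $\P_{\theta}\{\theta>\theta^*(\alpha_2,Y)\}=\alpha_2$ and completes the computation, establishing exactness. If $\Theta$ is taken to be closed one should in addition verify the endpoints $\theta\in\{\underline{\theta},\overline{\theta}\}$ by the limiting form of the lemma, but in the applications of interest (e.g.\ $\Theta=(0,\infty)$) every $\theta$ is interior and this issue does not arise.
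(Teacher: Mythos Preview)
Your proof is correct and follows essentially the same route as the paper: establish $\theta^*(1-\alpha_1,y)\leqslant\theta^*(\alpha_2,y)$ from the monotonicity of $F(y;\cdot)$ and $1-\alpha_1>\alpha_2$, then obtain the coverage probability by subtracting two applications of Lemma~\ref{P theta star}. Your explicit treatment of the atom $\P_\theta\{\theta^*(\alpha_2,Y)=\theta\}=0$ via the probability integral transform is a point the paper's terse argument leaves implicit, so if anything your version is slightly more careful.
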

\begin{proof}
Note first that the monotonicity of $F(y;\theta)$ with respect to $\theta$, together with the fact that $1-\alpha_1>\alpha_2$, imply $\theta^*(1-\alpha_1,y)\leqslant\theta^*(\alpha_2,y)$ for all $y$. By writing $\P_{\theta}\{\theta^*(1-\alpha_1,Y)\leqslant\theta\leqslant\theta^*(\alpha_2,Y)\}=\P_{\theta}\{\theta^*(1-\alpha_1,Y)\leqslant\theta\}-\P_{\theta}\{\theta^*(\alpha_2,Y)\leqslant\theta\}$ and then applying Lemma \ref{P theta star},  the result follows.
\end{proof}

When $\theta(1-\alpha_1,y)$ and $\theta(\alpha_2,y)$ are defined for all values $y$, the confidence interval $I^*_{\alpha_1,\alpha_2}(Y)$ is identical to the one given in Casella and Berger (2002) since in this case the original method works properly. However, if one of these quantities does not exist for some $y$, then $I^*_{\alpha_1,\alpha_2}(y)$ becomes simply a bound (either lower or upper) for $\theta$. It is indeed inconvenient when $\underline{\theta}=-\infty$ or $\overline{\theta}=\infty$ since $\theta^*(1-\alpha_1,Y)=-\infty$ or $\theta^*(\alpha_2,Y)=\infty$ with positive probability implies that the expected width of $I^*_{\alpha_1,\alpha_2}(Y)$ is infinite. Moreover, there may be values $y$ such that $I^*_{\alpha_1,\alpha_2}(y)$ degenerates into the ``points'' $\underline{\theta}$ or $\overline{\theta}$ which may lie outside the parameter space. This follows from the facts that $\theta^*(1-\alpha_1,y)=\overline{\theta}\Rightarrow\theta^*(\alpha_2,y)=\overline{\theta}$ and $\theta^*(\alpha_2,y)=\underline{\theta}\Rightarrow\theta^*(1-\alpha_1,y)=\underline{\theta}$. Thus, such values of $Y$ may be useless for the construction of this particular confidence interval. However, exclusion of these values from the analysis by truncating $Y$ off the ``useless'' set inflates the actual coverage probability to $(1-\alpha)/\P_\theta\{Y\in B(\alpha_1,\alpha_2)\}$, where $B(\alpha_1,\alpha_2)=\{y;\mbox{$\theta^*(1-\alpha_1,y)<\overline{\theta}$ and $\theta^*(\alpha_2,y)>\underline{\theta}$}\}$. This would result in 
 the corresponding confidence interval being quite conservative.

Situations where $F(y;\theta)=\alpha$ has no solution are not artificial at all as we shall see in the next section. It could occur in any standard model under restriction on the parameter space. For instance, suppose we wish to construct an exact confidence interval for a normal mean $\theta$ based on a random sample $X_1,\ldots,X_n$, but subject to the restriction $\underline{\theta}\leqslant\theta\leqslant\overline{\theta}$, where $\underline{\theta}<\overline{\theta}$ are known finite values. When the parameter space is unrestricted, then pivoting the CDF of the sufficient statistic $\bar{X}$ is straightforward and it results in standard confidence intervals for $\theta$ of the form $[\bar{X}-z_{\a_2}\sigma/\sqrt{n},\bar{X}+z_{\alpha_1}\sigma/\sqrt{n}\,]$ (assuming that the standard deviation $\sigma$ is known for simplicity), where $z_\alpha$ denotes the upper $\alpha$-quantile of the standard normal distribution. This is a consequence of the fact that the CDF of $\bar{X}$, $\Phi(\sqrt{n}(y-\theta)/\sigma)$, satisfies $\lim_{\theta\downarrow-\infty}\Phi(\sqrt{n}(y-\theta)/\sigma)=1$, $\lim_{\theta\uparrow\infty}\Phi(\sqrt{n}(y-\theta)/\sigma)=0$ for any $y\in\mathbb R$. However, this is not the case when $\theta$ is restricted to the interval $[\underline{\theta},\overline{\theta}]$. Then, for a range of $y$ that has non-zero probability, the equations $\Phi(\sqrt{n}(y-\theta)/\sigma)=1-\alpha_1$ and/or $\Phi(\sqrt{n}(y-\theta)/\sigma)=\alpha_2$ have no solution. The modification we have proposed  forces for such $y$ the endpoints of the confidence intervals to be $\underline{\theta}$ and/or $\overline{\theta}$, i.e., it applies the natural truncation whenever needed.

\begin{remark}
In case where the monotonicity of $F(y;\theta)$ with regard to $\theta$ is not strict for some $y$,  the equation $F(y;\theta)=\alpha$ may have multiple solutions. Then, by setting $\theta(\alpha,y)=\inf\{\theta:F(y,\theta)=\alpha\}$, Lemma \ref{P theta star} and Theorem \ref{thm I star} continue to hold since the crucial equivalence $\theta(\alpha,y)\leqslant\theta\Leftrightarrow F(y;\theta)\leqslant F(y;\theta(\alpha,y))=\alpha$ still remains valid.
\end{remark}

\begin{remark}
In cases where $Y$ is stochastically decreasing in $\theta$, i.e.,~when $F(y;\theta)$ is increasing in $\theta$ for all $y$, the distribution can be reparametrized by $\theta'=\psi(\theta)$ where $\psi$ is a strictly decreasing function. Then, $Y$ is stochasticaly increasing in $\theta'$ and everything works as above.
\end{remark}

\begin{remark}\label{noncontinuous}
The extension of the method may be adapted to the case where $Y$ has a noncontinuous distribution. Following Casella and Berger (2002), let $\bar{F}(y;\theta)=\P_{\theta}(Y\geqslant y)$. For any $y\in\mathcal Y$, this function is increasing in $\theta$ since $F(y;\theta)$ is decreasing. For $\alpha\in(0,1)$ and $y\in\mathcal Y$, let us use $\theta'(\alpha,y)$ to denote the solution of the equation $\bar{F}(y;\theta)=\alpha$ with respect to $\theta$, when it  exists. Define
\begin{equation*}
\theta_*(\alpha,y) = \left\{\begin{array}{ll} \underline{\theta},& \mbox{if $\bar{F}(y;\underline{\theta})\geqslant\alpha$}, \\
\theta'(\alpha,y),& \mbox{if $\bar{F}(y;\underline{\theta})<\alpha<\bar{F}(y;\overline{\theta})$}, \\
\overline{\theta},& \mbox{if $\bar{F}(y;\overline{\theta})\leqslant\alpha$}. \end{array}\right.
\end{equation*}
Then, for any $\alpha_1,\alpha_2>0$ with $\alpha_1+\alpha_2=\alpha\in(0,1)$, the random set $[\theta_*(\alpha_1,Y),\theta^*(\alpha_2,Y)]$ is a $100(1-\alpha)\%$ confidence interval for $\theta$. This follows from the facts that $\theta_*(\alpha_1,y)\leqslant\theta^*(\alpha_2,y)$ for all $y$ as well as $\P_{\theta}\{\theta_*(\alpha_1,Y)>\theta\}<\alpha_1$ and $\P_{\theta}\{\theta^*(\alpha_2,Y)<\theta\}<\alpha_2$. The last inequalities can be established as done in Lemma \ref{P theta star}.
\end{remark}

\section{Type-I censoring with exponential lifetimes}
\label{type-I censoring}

Recall the type-I censoring setup with exponential lifetimes mentioned in the Introduction. By using a moment generating function approach as done by Bartholomew (1963), it can be shown that for any integer $1\leqslant d_0\leqslant n$ the conditional CDF of $\hat{\theta}$, given $D\geqslant d_0$, is
\[
F(y;\theta|D\geqslant d_0) = \sum_{d=d_0}^n\sum_{\nu=0}^d \frac{(-1)^\nu\binom{n}{d}\binom{d}{\nu}}{\P_\theta(D\geqslant d_0)}e^{-(n-d+\nu)T/\theta}G\bigg(\frac{dy-(n-d+\nu)T}{\theta}\,;\,d\bigg),
\]
where $G(x;d)=\Gamma(d)^{-1}\int_0^x u^{d-1}e^{-u}{\rm d}u$, $x>0$, denotes the CDF of the gamma distribution with shape $d$ and scale $1$. The original Bartholomew's expression was given (in a slightly different form) for $d_0=1$ since this is the minimum requirement for $\hat{\theta}$ to exist, but the reason for defining the conditional CDF for any $d_0$ will become apparent at the end of this section. Note that the support of the above distribution is the set $\cup_{d=d_0}^n [(n-d)T/d,nT/d]$ which in general is not connected (cf.~Cramer and Balakrishnan, 2013). However, the CDF $F(y;\theta|D\geqslant d_0)$ is continuous for all $y>0$ as well as $\theta>0$.

\begin{lemma}\label{lemma limit}
For any integers $d,d_0$ such that $1\leqslant d_0\leqslant d\leqslant n$,
\[
\lim_{\theta\uparrow\infty}\frac{G(\{dy-(n-d+\nu)T\}/\theta\,;\,d)}{\P_{\theta}(D\geqslant d_0)} = \left\{ \begin{array}{ll} 0, & d>d_0, \\ \dfrac{(n-d_0)!}{n!}\dfrac{\{d_0 y-(n-d_0+\nu)T\}_+^{d_0}}{T^{d_0}}, & d=d_0, \end{array}\right.
\]
where $(a)_+=\max\{a,0\}$.
\end{lemma}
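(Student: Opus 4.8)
The plan is to find the exact polynomial-in-$1/\theta$ rates at which the numerator $G(\{dy-(n-d+\nu)T\}/\theta;d)$ and the denominator $\P_{\theta}(D\geqslant d_0)$ vanish as $\theta\uparrow\infty$, and then to compare them.

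First I would handle the denominator. Since $D=\sum_{i=1}^n I(X_i\leqslant T)$ is a sum of $n$ independent Bernoulli variables with common success probability $p=p(\theta)=1-e^{-T/\theta}$, it has a Binomial$(n,p)$ distribution, so that $\P_{\theta}(D\geqslant d_0)=\sum_{k=d_0}^n\binom{n}{k}p^k(1-p)^{n-k}$. As $\theta\uparrow\infty$ we have $p\downarrow 0$ with $\theta p=\theta(1-e^{-T/\theta})\to T$. Multiplying the tail sum by $\theta^{d_0}$, the leading ($k=d_0$) term satisfies $\theta^{d_0}\binom{n}{d_0}p^{d_0}(1-p)^{n-d_0}=\binom{n}{d_0}(\theta p)^{d_0}(1-p)^{n-d_0}\to\binom{n}{d_0}T^{d_0}$, while each higher term ($k>d_0$) equals $\binom{n}{k}(\theta p)^{d_0}p^{\,k-d_0}(1-p)^{n-k}\to 0$ because $p\to 0$ and $k-d_0\geqslant 1$. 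Hence $\theta^{d_0}\P_{\theta}(D\geqslant d_0)\to\binom{n}{d_0}T^{d_0}$.

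Next I would treat the numerator via the behaviour of $G$ near the origin. For $x>0$ one has $G(x;d)=\Gamma(d)^{-1}\int_0^x u^{d-1}e^{-u}\,{\rm d}u$, and since $e^{-u}\to 1$ as $u\downarrow 0$ this gives $G(x;d)=x^d/d!+o(x^d)$ as $x\downarrow 0$, whereas $G(x;d)=0$ for $x\leqslant 0$. Setting $x=\{dy-(n-d+\nu)T\}/\theta$ (which tends to $0$ as $\theta\uparrow\infty$, with sign eventually that of $a:=dy-(n-d+\nu)T$), I would distinguish two cases: if $a\leqslant 0$ then $x\leqslant 0$ for all large $\theta$, so $G=0=a_+^{\,d}$; if $a>0$ then $\theta^d G(x;d)=a^d/d!+o(1)$. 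In either case,
\[
\theta^d\,G\!\Big(\frac{dy-(n-d+\nu)T}{\theta};d\Big)\ \longrightarrow\ \frac{\{dy-(n-d+\nu)T\}_+^{\,d}}{d!}.
\]

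Finally I would combine the two rates by writing
\[
\frac{G(\{dy-(n-d+\nu)T\}/\theta;d)}{\P_{\theta}(D\geqslant d_0)}=\theta^{\,d_0-d}\cdot\frac{\theta^d\,G(\{dy-(n-d+\nu)T\}/\theta;d)}{\theta^{d_0}\,\P_{\theta}(D\geqslant d_0)},
\]
whose second factor converges to $\{dy-(n-d+\nu)T\}_+^{\,d}/(d!\binom{n}{d_0}T^{d_0})$ by the two limits above. When $d>d_0$ the prefactor $\theta^{\,d_0-d}\to 0$ and the whole expression tends to $0$; when $d=d_0$ the prefactor is $1$ and, using $d_0!\binom{n}{d_0}=n!/(n-d_0)!$, the limit reduces to the stated $\frac{(n-d_0)!}{n!}\frac{\{d_0y-(n-d_0+\nu)T\}_+^{\,d_0}}{T^{d_0}}$. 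The only genuinely delicate points are verifying that the higher-index binomial terms are negligible against the $k=d_0$ term and keeping track of nonpositive arguments so as to recover the $(\cdot)_+$; once these are in place the result follows from elementary asymptotics.
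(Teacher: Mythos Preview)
Your proof is correct and follows essentially the same route as the paper: both identify that the numerator vanishes like $\theta^{-d}$ (via $G(x;d)=x^d/d!+o(x^d)$ as $x\downarrow 0$, which the paper obtains from the Poisson-sum representation rather than directly from the integral) and that the denominator vanishes like $\theta^{-d_0}$ (via the leading binomial term), and then compare rates. Your device of premultiplying numerator and denominator by $\theta^d$ and $\theta^{d_0}$ separately is just a slightly more explicit bookkeeping of the same asymptotics.
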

\begin{proof}
Consider the Poisson sum representation of the CDF of the gamma distribution with integer shape $d$ and scale $1$, $G(u;d) = \sum_{j=d}^{\infty}e^{-u}u^j/j!=e^{-u}u^d/d!+o(u^d)$ as $u$ approaches zero. Clearly, if $dy-(n-d+\nu)T\leqslant 0$, the quantity on the left hand side equals zero and so its limit is zero as well. When $dy-(n-d+\nu)T>0$,
\begin{multline*}
\frac{G(\{dy-(n-d+\nu)T\}/\theta\,;\,d)}{\P_{\theta}(D\geqslant d_0)} = \frac{\sum_{j=d}^{\infty}e^{-\{dy-(n-d+\nu)T\}/\theta}\{dy-(n-d+\nu)T\}^j/(\theta^j j!)} {\sum_{j=d_0}^n\binom{n}{j}(1-e^{-T/\theta})^je^{-(n-j)T/\theta}}
\\ =
\frac{e^{-\{dy-(n-d+\nu)T\}/\theta}\{dy-(n-d+\nu)T\}^d/(\theta^d d!)+o(\theta^{-d})} {\binom{n}{d_0}(1-e^{-T/\theta})^{d_0}e^{-(n-d_0)T/\theta}+o(\theta^{-d_0})}
\end{multline*}
as $\theta\uparrow\infty$. Since $(1-e^{-T/\theta})^{d_0}=O(\theta^{-d_0})$, the limit equals zero provided $d>d_0$. On the other hand, if $d=d_0$,  it can be verified that the limit is as stated above.
\end{proof}

\begin{theorem}\label{limit F}
For any integer $d_0$ such that $1\leqslant d_0\leqslant n$, we have
\[
{\displaystyle\lim_{\theta\downarrow 0}}F(y;\theta|D\geqslant d_0)=1, \quad \forall\,y>0,
\]
and
\[
\lim_{\theta\uparrow\infty}F(y;\theta|D\geqslant d_0) = \left\{ \begin{array}{ll} 0, & y\leqslant (n-d_0)T/d_0, \\ \displaystyle\sum_{\nu=0}^{d_0-1}\dfrac{(-1)^\nu\{d_0 y-(n-d_0+\nu)T\}_+^{d_0}}{\nu!(d_0-\nu)!\,T^{d_0}}, & (n-d_0)T/d_0<y\leqslant nT/d_0. \end{array}\right.
\]
Moreover, the last sum takes every value in $(0,1)$ as $y$ ranges over $((n-d_0)T/d_0,nT/d_0)$.
\end{theorem}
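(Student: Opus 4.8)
The plan is to treat the three assertions separately, exploiting the fact that the closed-form expression for $F(y;\theta|D\geqslant d_0)$ is a \emph{finite} double sum, so that limits in $\theta$ may be passed through the summation term by term.

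For the first limit I would let $\theta\downarrow 0$ directly in the series. Here $e^{-(n-d+\nu)T/\theta}\to 0$ whenever $n-d+\nu\geqslant 1$, while $G(\{dy-(n-d+\nu)T\}/\theta;d)$ stays bounded by $1$; hence every product $e^{-(n-d+\nu)T/\theta}G(\cdots)$ vanishes in the limit except the one indexed by $d=n$, $\nu=0$, for which the exponential equals $1$ and the argument of $G$ tends to $+\infty$ (as $y>0$), so $G\to 1$. Since the denominator $\P_\theta(D\geqslant d_0)=\sum_{j=d_0}^n\binom{n}{j}(1-e^{-T/\theta})^je^{-(n-j)T/\theta}\to 1$ as $\theta\downarrow 0$ (only its $j=n$ term survives) and the $(d,\nu)=(n,0)$ coefficient is $1$, the whole expression tends to $1$.

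For the second limit I would feed Lemma \ref{lemma limit} into the series, again term by term, writing each summand as $(-1)^\nu\binom{n}{d}\binom{d}{\nu}e^{-(n-d+\nu)T/\theta}\cdot G(\cdots)/\P_\theta(D\geqslant d_0)$. Each exponential tends to $1$, while the ratio vanishes for $d>d_0$ and converges to the stated gamma-shape limit for $d=d_0$; thus only the $d=d_0$ terms survive, and the binomial bookkeeping collapses via $\binom{n}{d_0}\binom{d_0}{\nu}(n-d_0)!/n!=1/(\nu!(d_0-\nu)!)$. This yields $\sum_{\nu=0}^{d_0}(-1)^\nu\{d_0y-(n-d_0+\nu)T\}_+^{d_0}/(\nu!(d_0-\nu)!\,T^{d_0})$. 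The $\nu=d_0$ term is $\{d_0y-nT\}_+^{d_0}$, which vanishes for $y\leqslant nT/d_0$, so the sum truncates at $d_0-1$ on the stated range. The case split is then immediate: if $y\leqslant(n-d_0)T/d_0$ every bracket $d_0y-(n-d_0+\nu)T\leqslant-\nu T\leqslant 0$ and the limit is $0$, while for $(n-d_0)T/d_0<y\leqslant nT/d_0$ we obtain the displayed sum.

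For the final assertion, denote by $g(y)$ that sum on the open interval $J=((n-d_0)T/d_0,nT/d_0)$. It is a finite sum of functions $\{d_0y-(n-d_0+\nu)T\}_+^{d_0}$, each continuous since $d_0\geqslant 1$, so $g$ is continuous; it is also nondecreasing, being the pointwise limit of the CDFs $F(\cdot;\theta|D\geqslant d_0)$. At the left endpoint every bracket is $\leqslant 0$, whence $g\to 0$. The real work is at the right endpoint: there each bracket equals $(d_0-\nu)T$, so $g(nT/d_0)=\frac1{d_0!}\sum_{\nu=0}^{d_0}(-1)^\nu\binom{d_0}{\nu}(d_0-\nu)^{d_0}$ (the $\nu=d_0$ term added harmlessly, being $0$), and this last sum is the classical finite-difference (surjection-counting) identity $\sum_{\nu=0}^{d_0}(-1)^\nu\binom{d_0}{\nu}(d_0-\nu)^{d_0}=d_0!$, giving $g(nT/d_0)=1$. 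I expect this endpoint identity to be the only genuinely non-routine step; once it is in hand, continuity together with the intermediate value theorem shows that $g$ attains every value in $(0,1)$ as $y$ ranges over $J$.
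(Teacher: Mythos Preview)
Your proposal is correct and follows essentially the same route as the paper: term-by-term limits in the finite double sum (isolating the $(d,\nu)=(n,0)$ summand as $\theta\downarrow0$, and invoking Lemma~\ref{lemma limit} to isolate the $d=d_0$ summands as $\theta\uparrow\infty$), followed by the combinatorial identity at $y=nT/d_0$ which the paper phrases as the Stirling number $S(d_0,d_0)=1$ and you phrase as the surjection-counting identity. Your argument for the final assertion via continuity and the intermediate value theorem is the same as the paper's; your observation that $g$ is nondecreasing as a pointwise limit of CDFs is a clean substitute for the paper's unproved assertion of strict monotonicity, though neither is actually needed once the endpoint values $0$ and $1$ are established.
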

\begin{proof}
Note that as $\theta\downarrow 0$, $1-e^{-T/\theta}\uparrow 1$ and so $D$ converges in probability to $n$. This implies that $\lim_{\theta\downarrow 0}\P_{\theta}(D\geqslant d_0)=1$ for all $d_0$. On the other hand, as $\theta\downarrow 0$, the only term that matters in the sum representation of $F(y;\theta|D\geqslant d_0)$ is the one corresponding to $d=n$, $\nu=0$, since in all other cases we have $n-d+\nu>0$ and so $\lim_{\theta\downarrow 0}e^{-(n-d+\nu)T/\theta}=0$. Hence, for all $y>0$,
\[
\lim_{\theta\downarrow 0}F(y;\theta|D\geqslant d_0) = \lim_{\theta\downarrow 0} \frac{G(ny/\theta\,;\,n)}{\P_\theta(D\geqslant d_0)} = 1,
\]
and thus the first limit is established. Observe now that by Lemma \ref{lemma limit} the ratios $G(\{dy-(n-d+\nu)T\}/\theta;d)/\P_{\theta}(D\geqslant d_0)$ converge to zero as $\theta\uparrow\infty$ for all $d>d_0$. Hence,
\[
\lim_{\theta\uparrow\infty}F(y;\theta|D\geqslant d_0) = \lim_{\theta\uparrow\infty}\sum_{\nu=0}^{d_0} \frac{(-1)^\nu\binom{n}{d_0}\binom{d_0}{\nu}}{\P_\theta(D\geqslant d_0)}e^{-(n-d_0+\nu)T/\theta}G\bigg(\frac{d_0y-(n-d_0+\nu)T}{\theta}\,;\,d_0\bigg).
\]
Note, however, that if $y\leqslant(n-d_0)T/d_0$, all of the CDFs appearing on the right hand side equal zero and so the result follows trivially. On the other hand, for $y\in((n-d_0)T/d_0,nT/d_0]$, another application of Lemma \ref{lemma limit} shows that the limit is as stated above. (The term corresponding to $\nu=d_0$  always equals zero.)

Finally,  to prove that the sum takes all values in $(0,1)$, observe first that it is continuous and strictly increasing in $y$. Its limit as $y\downarrow(n-d_0)T/d_0$ is obviously zero. On the other hand, as $y\uparrow nT/d_0$, the limit becomes
\[
\sum_{\nu=0}^{d_0-1}\dfrac{(-1)^\nu\{d_0(n T/d_0)-(n-d_0+\nu)T\}^{d_0}}{\nu!(d_0-\nu)!\,T^{d_0}} =
\sum_{\nu=0}^{d_0-1}\dfrac{(-1)^\nu(d_0-\nu)^{d_0}}{\nu!(d_0-\nu)!} = \sum_{k=1}^{d_0}\frac{(-1)^{d_0-k}k^{d_0}}{(d_0-k)!k!}.
\]
The last sum is the Stirling number of the second kind $S(d_0,d_0)$ (cf.~Charalambides, 2005) which is equal to $1$, and this completes the proof.
\end{proof}

\begin{corollary}\label{range of F}
For any $u\in(0,1)$, the range of $F((n-1+u)T;\theta|D\geqslant 1)$ as a function of $\theta$ is the interval $(u,1)$.
\end{corollary}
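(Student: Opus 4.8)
The plan is to specialize Theorem~\ref{limit F} to the case $d_0=1$, evaluate the two one-sided limits of the map $\theta\mapsto F((n-1+u)T;\theta|D\geqslant 1)$ over $(0,\infty)$, and then combine these with continuity and strict monotonicity in $\theta$ to pin down the range. First I would note that for $u\in(0,1)$ the argument $y=(n-1+u)T$ satisfies $(n-1)T<y<nT$, i.e.\ it lies strictly inside the interval $((n-d_0)T/d_0,nT/d_0)$ with $d_0=1$, so both parts of Theorem~\ref{limit F} are available at $y$. The first limit there gives at once $\lim_{\theta\downarrow 0}F((n-1+u)T;\theta|D\geqslant 1)=1$.

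Next I would compute the upper limit. Setting $d_0=1$ collapses the sum in Theorem~\ref{limit F} to its single $\nu=0$ term, namely $(y-(n-1)T)_+/T$; substituting $y=(n-1+u)T$ gives $(uT)_+/T=u$ since $u>0$. Hence $\lim_{\theta\uparrow\infty}F((n-1+u)T;\theta|D\geqslant 1)=u$. Thus the two boundary limits of the function are $u$ and $1$.

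Finally I would assemble the conclusion. The CDF $F(\cdot\,;\theta|D\geqslant 1)$ is continuous in $\theta$ on $(0,\infty)$, as recorded after its closed form, and by condition~I together with the strict monotonicity assumed in Section~\ref{the extension} it is strictly decreasing in $\theta$. A strictly decreasing continuous function on the open interval $(0,\infty)$ maps it bijectively onto the open interval delimited by its one-sided limits at the endpoints, which here is $(u,1)$: by the intermediate value theorem every value of $(u,1)$ is attained, while the limiting values $u$ and $1$ themselves are not, since for each finite $\theta>0$ strict monotonicity forces $u<F((n-1+u)T;\theta|D\geqslant 1)<1$.

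The computation is routine once Theorem~\ref{limit F} is in hand; the only point deserving care is the \emph{openness} of the range, namely that neither endpoint is attained. This is exactly where the strictness of the monotonicity (rather than merely the two limits) is used, and it is the reason the range is the open interval $(u,1)$ and not its closure.
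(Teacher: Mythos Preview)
Your proof is correct and follows essentially the same route as the paper's: specialize Theorem~\ref{limit F} to $d_0=1$, read off the two one-sided limits $u$ and $1$ at $y=(n-1+u)T$, and invoke continuity in $\theta$. You are somewhat more explicit than the paper in two places---you actually evaluate the $d_0=1$ sum to get $u$, and you spell out why the range is the \emph{open} interval $(u,1)$ via strict monotonicity---whereas the paper simply asserts the range equals the interval between the two limits.
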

\begin{proof}
For any $u\in(0,1)$, we have  $(n-1+u)T\in((n-1)T,nT)$. Hence, by applying Theorem \ref{limit F} and using the continuity of $F((n-1+u)T;\theta|D\geqslant 1)$ with respect to $\theta$, we get that its range as a function of $\theta$ is the interval
\[
\big(\lim_{\theta\uparrow\infty}F((n-1+u)T;\theta|D\geqslant 1),\lim_{\theta\downarrow 0}F((n-1+u)T;\theta|D\geqslant 1)\big) = (u,1)
\]
as claimed.
\end{proof}

Corollary \ref{range of F} tells us that whatever $\alpha_1$ and $\alpha_2$  we choose for calculating the exact $100(1-\alpha)\%$ confidence interval for $\theta$, there is always a positive probability, no matter how small, where at least one of the endpoints is not defined. More specifically, $\theta(\alpha_2,y)$ is not defined for $y\geqslant(n-1+\alpha_2)T$, while both endpoints $\theta(1-\alpha_1,y)$ and $\theta(\alpha_2,y)$ are not defined for $y\geqslant(n-1+(1-\alpha_1))T$.

\begin{proposition}
For any $u\in(0,1)$, the probability that $\theta(u,\hat{\theta})$ is not defined lies between $0$ and $1-u$.
\end{proposition}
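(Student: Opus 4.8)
The plan is to reduce the event ``$\theta(u,\hat\theta)$ is not defined'' to an elementary event of the form $\{\hat\theta\geqslant c\}$ with an explicit threshold $c$, and then to read off the required bounds directly from Corollary~\ref{range of F}.

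First I would recall that, for a fixed value $y$, the quantity $\theta(u,y)$ is by definition the solution in $\theta$ of $F(y;\theta|D\geqslant 1)=u$. Since $\theta\mapsto F(y;\theta|D\geqslant 1)$ is continuous and strictly decreasing with $\lim_{\theta\downarrow 0}F(y;\theta|D\geqslant 1)=1$ (Theorem~\ref{limit F}), its range is the interval $(\lim_{\theta\uparrow\infty}F(y;\theta|D\geqslant 1),1)$, so $\theta(u,y)$ fails to exist precisely when $u\leqslant\lim_{\theta\uparrow\infty}F(y;\theta|D\geqslant 1)$. Invoking the explicit limit of Theorem~\ref{limit F} for $d_0=1$, namely $\lim_{\theta\uparrow\infty}F(y;\theta|D\geqslant 1)=\{y-(n-1)T\}_+/T$ for $y\leqslant nT$, this condition becomes $y\geqslant(n-1+u)T$ (using $u>0$), which is exactly the threshold already recorded in the discussion following Corollary~\ref{range of F}.

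With this identification, the event in question is $\{\hat\theta\geqslant(n-1+u)T\}$, whose probability, conditional on $D\geqslant 1$ (the event on which $\hat\theta$ is meaningful), equals $1-F((n-1+u)T;\theta|D\geqslant 1)$; here the continuity of the CDF guarantees that there is no atom at the threshold, so the choice of weak versus strict inequality is immaterial. The conclusion is then immediate: Corollary~\ref{range of F} asserts that $F((n-1+u)T;\theta|D\geqslant 1)$ takes all and only the values in $(u,1)$ as $\theta$ ranges over $(0,\infty)$, whence its complement $1-F((n-1+u)T;\theta|D\geqslant 1)$ lies in the open interval $(0,1-u)$ for every $\theta$, as claimed.

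I do not expect a genuine obstacle, since the analytic work is already carried out in Theorem~\ref{limit F} and Corollary~\ref{range of F}. The only steps needing care are the translation of non-existence of $\theta(u,y)$ into the clean inequality $y\geqslant(n-1+u)T$, and being explicit that the probability in the statement is understood conditionally on $D\geqslant 1$; with these in hand the bounds drop out of the corollary with no further computation.
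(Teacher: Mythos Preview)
Your proposal is correct and follows essentially the same route as the paper: identify $\{\theta(u,\hat\theta)\ \text{undefined}\}$ with $\{\hat\theta\geqslant(n-1+u)T\}$ and then invoke Corollary~\ref{range of F} to bound $1-F((n-1+u)T;\theta|D\geqslant 1)$. The only difference is that you spell out the derivation of the threshold from Theorem~\ref{limit F}, whereas the paper simply quotes it from the discussion preceding the proposition.
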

\begin{proof}
The quantity $\theta(u,y)$ is not defined when $y\geqslant(n-1+u)T$. This event has probability $1-F((n-1+u)T;\theta)$. By Corollary \ref{range of F}, $F((n-1+u)T;\theta)$ lies between $u$ and $1$, and hence the result.
\end{proof}

\begin{corollary}\label{pinfty pempty}
Let $\alpha_1,\alpha_2>0$ with $\alpha_1+\alpha_2=\alpha\in(0,1)$. Consider the exact $100(1-\alpha)\%$ confidence interval $I^*_{\alpha_1,\alpha_2}(\hat{\theta})$ for $\theta$. \\
(a) The probability $p_\infty$ that its upper point is infinite (so that the width of $I^*_{\alpha_1,\alpha_2}(\hat{\theta})$ is infinite) lies between $0$ and $1-\alpha_2$; \\
(b) The probability $p_\varnothing$ that both of its endpoints are infinite (so that $I^*_{\alpha_1,\alpha_2}(\hat{\theta})$ is an empty set) lies between $0$ and $\alpha_1$.
\end{corollary}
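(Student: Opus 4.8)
The plan is to reduce both statements to the Proposition just proved, which already controls the probability that $\theta(u,\hat\theta)$ fails to exist. Here the parameter space is $\Theta=(0,\infty)$, so $\underline\theta=0$ and $\overline\theta=\infty$, and an endpoint of $I^*_{\alpha_1,\alpha_2}(\hat\theta)$ is infinite precisely when the corresponding $\theta^*$ takes the value $\overline\theta$. The whole task is therefore to translate the two events ``the upper point is infinite'' and ``both points are infinite'' into events of the form $\{\theta(u,\hat\theta)\text{ is not defined}\}$ for a suitable $u$, and then to invoke the Proposition with that $u$.

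For part (a), I would first observe that by Theorem \ref{limit F} one has $F(y;\underline\theta|D\geqslant1)=\lim_{\theta\downarrow0}F(y;\theta|D\geqslant1)=1$ for every $y>0$, so in the definition \eqref{theta star} the first branch (giving $\theta^*=\underline\theta$) is never triggered for $\alpha=\alpha_2\in(0,1)$. Consequently the upper endpoint $\theta^*(\alpha_2,\hat\theta)$ equals $\overline\theta=\infty$ if and only if $F(\hat\theta;\overline\theta|D\geqslant1)\geqslant\alpha_2$, which is exactly the condition under which $\theta(\alpha_2,\hat\theta)$ fails to exist. Hence $p_\infty=\P_\theta\{\theta(\alpha_2,\hat\theta)\text{ is not defined}\}$, and applying the Proposition with $u=\alpha_2$ gives that $p_\infty$ lies between $0$ and $1-\alpha_2$, as required.

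For part (b), the key structural fact is the implication noted right after Theorem \ref{thm I star}: since $1-\alpha_1>\alpha_2$, monotonicity of $F$ in $\theta$ yields $\theta^*(1-\alpha_1,y)=\overline\theta\Rightarrow\theta^*(\alpha_2,y)=\overline\theta$. Thus the interval has both endpoints infinite if and only if its \emph{lower} endpoint $\theta^*(1-\alpha_1,\hat\theta)$ is infinite. Repeating the argument of part (a) with $\alpha_2$ replaced by $1-\alpha_1$ (again using $F(y;\underline\theta|D\geqslant1)=1$, so that the only relevant boundary branch is $\theta^*=\overline\theta$), this happens exactly when $\theta(1-\alpha_1,\hat\theta)$ is not defined. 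Therefore $p_\varnothing=\P_\theta\{\theta(1-\alpha_1,\hat\theta)\text{ is not defined}\}$, and the Proposition with $u=1-\alpha_1$ bounds this probability between $0$ and $1-(1-\alpha_1)=\alpha_1$.

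I do not expect a genuine obstacle here; the content lies entirely in correctly matching the two events to the Proposition. The one point that needs care is the identity $F(y;\underline\theta|D\geqslant1)=1$, which guarantees that neither $\theta^*(\alpha_2,\cdot)$ nor $\theta^*(1-\alpha_1,\cdot)$ can ever collapse to the lower boundary $\underline\theta=0$; this is what makes ``endpoint infinite'' equivalent to ``$\theta(u,\hat\theta)$ undefined'' rather than to a two-sided boundary event. Verifying the implication used in part (b) (lower infinite $\Rightarrow$ upper infinite) is the only other ingredient, and it is immediate from the monotonicity assumption together with $1-\alpha_1>\alpha_2$.
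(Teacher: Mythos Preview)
Your proposal is correct and matches the paper's intended argument: the paper states Corollary~\ref{pinfty pempty} without proof, treating it as an immediate consequence of the preceding Proposition, and your write-up simply spells out this deduction by identifying $p_\infty$ and $p_\varnothing$ with the probabilities that $\theta(\alpha_2,\hat\theta)$ and $\theta(1-\alpha_1,\hat\theta)$, respectively, are undefined. The auxiliary observations you invoke---$F(y;\underline\theta|D\geqslant1)=1$ so the lower boundary branch never occurs, and $\theta^*(1-\alpha_1,y)=\overline\theta\Rightarrow\theta^*(\alpha_2,y)=\overline\theta$---are exactly the ones the paper records before and after Theorem~\ref{thm I star} and Corollary~\ref{range of F}.
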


Notice that the problem of nonexistence of the solutions $\theta(\alpha,y)$ occurs only when $y\in((n-1)T,nT)$. This range of values are achieved by the MLE if and only if $D=1$. One may therefore think that a single observation is not enough for the construction of a  confidence interval with finite endpoints and that the problem can be solved by requiring more than one failure to be observed. However, whatever number of observations is required, the problem remains. Indeed, Theorem \ref{limit F} shows that no matter what $d_0$ is, there is always a range of values where the limit of $F(y;\theta|D\geqslant d_0)$ as $\theta\uparrow\infty$ may take any value between zero and one. Fortunately, $p_\infty$ and $p_\varnothing$ decrease to zero with $n$ at an exponential rate. Indeed, these probabilities are strictly smaller than $\P_\theta(D=1)/\P_\theta(D\geqslant 1)=n(e^{T/\theta}-1)/(e^{nT/\theta}-1)$, 
 and so most of the times $I^*_{\alpha_1,\alpha_2}(\hat{\theta})$ results in a proper interval. Nevertheless, $p_\infty$ and $p_\varnothing$ always remain positive which means that the expected width of $I^*_{\alpha_1,\alpha_2}(\hat{\theta})$ is infinite and there is also a positive probability to become an empty set.

\section{Some other scenarios facing the same problem}
\label{other sampling schemes}

As mentioned in the Introduction, Barlow et al.~(1968) applied the method of pivoting the CDF for constructing exact confidence limits for the exponential mean under type-I censoring. Since then, the approach has been used for the same purpose in more general models. 
In this section, we briefly review one of these models and demonstrate that for constructing  exact confidence intervals by pivoting the CDF of the corresponding MLE the extension presented in Section 2 must also be applied.

One of the flexible censoring schemes that appeared in the literature is hybrid type-I censoring. This scheme was originally proposed by Epstein (1954) and was further studied by Chen and Bhattacharyya (1988) and Childs et al.~(2003); see also Balakrishnan and Kundu (2013). Chen and Bhattacharyya (1988) and Childs et al.~(2003) pivot the CDF (more specifically, the survival function) of the MLE of $\theta$ in order to obtain exact confidence bounds and intervals, respectively. The difference of this scheme from the standard type-I censoring is that the experiment terminates at the random time $T^*=\min\{X_{r:n},T\}$, where $1\leqslant r\leqslant n$. The idea is that in cases wherein the experimenter is satisfied with $r$ observations, he/she is allowed to stop the experiment before the pre-determined time as soon as $r$ failures have occured. Assuming exponential lifetimes, Epstein (1954) found the MLE of $\theta$ in this case as
\[
\hat{\theta} = \frac{1}{D^*}\bigg\{\sum_{i=1}^{D^*} X_{i:n}+(n-D^*)T^*\bigg\},
\]
where $D^*=\sum_{i=1}^{n}I(X_i\leqslant T^*)$ denotes the number of failures that are observed up to time $T^*$, that is, $D^*=r$ if $T^*=X_{r:n}$ and $D^*=D$ if $T^*=T$, where $D$ is as defined before. As in the case of conventional type-I censoring, this MLE is also defined if and only if $D\geqslant 1$. Chen and Bhattacharyya (1988) gave an expression for the conditional CDF of $\hat{\theta}$ given $D\geqslant 1$, which was further simplified by Childs et al.~(2003). Their expression of the CDF is
\begin{align*}
F_r(y;\theta & |D\geqslant 1) = \bigg\{\sum_{d=1}^{r-1}\sum_{\nu=0}^{d}(-1)^\nu\binom{n}{d}\binom{d}{\nu}e^{-(n-d+\nu)T/\theta}G\bigg(\frac{dy-(n-d+\nu)T}{\theta}\,;\,d\bigg)
\\ &
\phantom{\bigg\{}+r\binom{n}{r}\sum_{\nu=1}^r\frac{(-1)^\nu e^{-(n-r+\nu)T/\theta}}{n-r+\nu}\binom{r-1}{\nu-1}G\bigg(\frac{ry-(n-r+\nu)T}{\theta}\,;\,r\bigg)
\\ &
\phantom{\bigg\{}+G(ry/\theta;r)\bigg\}\Big/\P_\theta(D\geqslant 1), \qquad\qquad 0<y<nT.
\end{align*}
Thus, in this case as well, the distribution is represented as a (generalized) mixture of gamma distributions. Its stochastic monotonicity has been established by Balakrishnan and Iliopoulos (2009). On the other hand, it is easy to verify that there is a range of values for which the limit of the CDF as $\theta\uparrow\infty$ is not zero. More specifically, for any $u\in(0,1)$, we have $\lim_{\theta\uparrow\infty}F_r((n-1+u)T;\theta|D\geqslant 1)=u$ when $r>1$ and $\lim_{\theta\uparrow\infty}F_r(nuT;\theta|D\geqslant 1)=u$ when $r=1$ which means that the problem of nonexistence of a solution to the equation $F_r(y;\theta|D\geqslant 1)=u$ for particular values $y$ is also present here. The same situation arises under type-I hybrid progressive censoring introduced by Childs et al.~(2008) (see also Cramer and Balakrishnan, 2013), generalized type-II hybrid censoring (Chandrasekar et al., 2004), progressive type-I censoring (Balakrishnan, 2007; Balakrishnan et al., 2011) and some other life-testing scenarios as well.

\section{Discussion}
\label{discussion}

The construction of exact confidence intervals for the scale parameter of the exponential distribution under life-test with time constraints has been discussed in a variety of censoring scenarios.  The method has then been 
  compared by simulation with some alternate  approaches like normal approximation, bootstrap confidence intervals and Bayesian credible intervals. Obviously, exact confidence intervals outperform confidence intervals based on such approximate methods in terms of coverage probability. In some cases, simulated average widths are reported as well. However, as illustrated in Corollary \ref{pinfty pempty} and the ensuing discussion, the expected width of the exact confidence interval is infinite. This is a consequence of the fact that for particular values $y$ of the MLE the equation $F(y;\theta)=\alpha$ has no solution, violating condition II given in the Introduction. It appears that this condition has not been noticed as a restriction and has been taken for granted. 
Normally, from time to time in Monte Carlo simulations, this problem must have led numerical methods to behave strangely. For example, the Newton method would keep iterating until overflow while the bisection method would not even be able to perform the initial step. In this paper, we have formally analyzed the situation in the case of conventional type-I censoring and discussed briefly other sampling schemes under which the CDF of the MLE behaves in a similar manner. 
The analysis performed in Section 3 suggests that there is no satisfactory solution to the problem of having both finite expected width and attainment of the coverage probability. For instance, truncation of the right endpoint from infinity to any finite value obviously decreases the coverage probability.

Running experiments subject to time constraints leads naturally to truncated distributions. One may wonder whether truncation of a distribution always does cause troubles to the existence of the solution $\theta(\alpha,y)$ of the equation $F(y;\theta)=\alpha$. In some cases, this seems to depend on the tail behavior of the baseline (i.e., the untruncated) distribution. Let $Y$ be a random variable originally coming from the location family with CDFs $F_0(y-\theta)$, $\theta\in\mathbb R$. Assume that $Y$ is (possibly doubly) truncated in the interval $(T_1,T_2)$, where $T_1$ or $T_2$ can be $-\infty$ and $+\infty$, respectively. Then, the CDF of $Y$ is $F(y;\theta)=\{F_0(y-\theta)-F_0(T_1-\theta)\}/\{F_0(T_2-\theta)-F_0(T_1-\theta)\}$, $y\in(T_1,T_2)$. It can be verified that for any fixed $y$,
\begin{align}
\lim_{\theta\uparrow\infty}F(y;\theta)=~& \left\{\begin{array}{ll}\displaystyle\lim_{\theta\downarrow-\infty}\frac{F_0(y-T_2+\theta)}{F_0(\theta)}, & \mbox{if $T_2<\infty$}, \\ 0, & \mbox{otherwise},\end{array}\right. \label{left tail}
\\[1ex]
\lim_{\theta\downarrow-\infty}F(y;\theta)=~&
\left\{\begin{array}{ll}\displaystyle\lim_{\theta\uparrow\infty}\frac{\bar{F}_0(y-T_1+\theta)}{\bar{F}_0(\theta)}, & \mbox{if $T_1>-\infty$}, \\ 1, & \mbox{otherwise}, \end{array}\right. \label{right tail}
\end{align}
where $\bar{F}_0=1-F_0$. Assume further that $F(y;\theta)$ is decreasing in $\theta$. For instance, this is ensured if the baseline distribution has the monotone likelihood ratio property with respect to $\theta$.
Then, the existence of $\theta(\alpha,y)$ depends on the behavior of the tails of $F_0$. For example, when $F_0(y)=\Phi(y)$, $y\in\mathbb R$, i.e., the standard normal CDF, the above limits equal $0$ and $1$, respectively, for any $y$ (and $T_1<T_2$). Thus, $\theta(\alpha,y)$ always exists. On the other hand, when $F_0(y)=e^y/(1+e^y)$, i.e., the standard logistic CDF, the limits are $e^{y-T_2}$ and $e^{T_1-y}$, respectively, which means that for any particular $y$, $\theta(\alpha,y)$ exists only for a restricted range of $\alpha$'s.

\section*{References}

\begin{description}\setlength{\itemsep}{-.5ex}
\item Arnold, B.C., Balakrishnan, N., Nagaraja, H.N.~(2008). {\it A First Course in Order Statistics}, Classic Edition, SIAM, Philadelphia.
\item Balakrishnan, N.~(2007). Progressive censoring: an appraisal (with discussions), {\it Test}, 16, 211--296.
\item Balakrishnan, N., Brain, C., Mi, J.~(2002). Stochastic order and mle of the mean of the exponential distribution, {\it Methodology and Computing in Applied Probability}, 4, 83--93.
\item Balakrishnan, N., Han, D., Iliopoulos, G.~(2011). Exact inference for progressively type-I censored exponential failure data, {\it Metrika}, 73, 335--358.
\item Balakrishnan, N., Iliopoulos, G.~(2009). Stochastic monotonicity of the MLE of exponential mean under different censoring schemes, {\it Annals of the Institute of Statistical Mathematics}, 61, 753--772.
\item Balakrishnan, N., Kundu, D.~(2013). Hybrid censoring: Models, inferential results and applications (with discussions), {\it Computational Statistics and Data Analysis}, 57, 166--209.
\item Barlow R.E., Madansky A., Proschan F., Scheuer E.~(1968). Statistical estimation procedures for the ``burn-in'' process. {\it Technometrics}, 10, 51--62.
\item Bartholomew, D.J.~(1963). The sampling distribution of an estimate arising in life testing, {\it Technometrics}, 5, 361--374.
\item Bartlett, M.S.~(1953). Approximate confidence intervals, {\it Biometrika}, 40, 12--19.
\item Casella, G. and Berger, R.L.~(2002). {\it Statistical Inference}, Second edition, Duxbury Press, Boston.
\item Chandrasekar, B., Childs, A., Balakrishnan, N.~(2004). Exact likelihood inference for the exponential distribution under generalized Type-I and Type-II hybrid censoring, {\it Naval Research Logistics}, 7, 994--1004.
\item Charalambides, C.A.~(2005). {\it Combinatorial Methods in Discrete Distributions}, Wiley, Chichester.
\item Chen, S., Bhattacharyya, G.K.~(1988). Exact confidence bounds for an exponential parameter under hybrid censoring, {\it Communications in Statistics -- Theory and Methods}, 17, 1857--1870.
\item Childs, A., Chandrasekar, B., Balakrisnan N., Kundu, D.~(2003). Exact likelihood inference based on Type-I and Type-II hybrid censored samples from the exponential distribution, {\it Annals of the Institute of Statistical Mathematics}, 55, 319--330.
\item Childs, A., Chandrasekar, B., Balakrishnan, N.~(2008). Exact likelihood inference for an exponential parameter under progressive hybrid censoring schemes, In: 
{\it Statistical Models and Methods for Biomedical and Technical Systems} (Eds., F. Vonta, M. Nikulin, N. Limnios and C. Huber-Carol), pp. 323--334, Birkh\"auser, Boston.
\item Clopper, C.J., Pearson, E.S.~(1934). The use of confidence or fiducial limits illustrated in the case of the binomial, {\it Biometrika}, 26, 404--413.
\item Cramer, E., Balakrishnan, N.~(2013). On some exact distributional results based on type-I progressively hybrid censored data from exponential distributions, {\it Statistical Methodology}, 10, 128--150.
\item Epstein, B.~(1954). Truncated life tests in the exponential case, {\it Annals of Mathematical Statistics}, 25, 555--564.
\end{description}

\end{document}